\documentclass{article}

\usepackage{import}

\usepackage{amsmath,amsthm,mathtools}
\usepackage{amssymb}
\usepackage{wasysym}
\usepackage[dvipsnames]{xcolor}
\colorlet{mymathcolor}{blue}
\colorlet{mymathcoloremph}{red}
\usepackage{xpatch,letltxmacro}
\usepackage[no-math]{fontspec}
\usepackage{unicode-math}
\unimathsetup{
    mathbf = sym,
    warnings-off = {
        mathtools-colon,
        mathtools-overbracket
    }
}
\newfontfamily\upit[Path={./}]{lmromanunsl10-regular.otf}

\newfontfamily\dhstyle[Path={./},Scale=MatchLowercase,UprightFont={*-regular.otf},SlantedFont={*-oblique.otf}]{lmromandunh10}

\newfontfamily{\dcstyle}[%
  ItalicFont={lmsansdemicond10-oblique.otf},%
  SlantedFont={lmsansdemicond10-oblique.otf},%
]{lmsansdemicond10-regular.otf}

\setmathfontface\symupit[%
  Path={./}]{lmromanunsl10-regular.otf}
\setmathfontface\symitup[Path={./}]{lmromanunsl10-regular.otf}
\xpatchcmd{\qopname}{#3}{\textcolor{mymathcolor}{#3}}{}{}

\LetLtxMacro\latexsqrt\sqrt
\RenewDocumentCommand{\sqrt}{om}{%
  \colorlet{current}{.}
  \IfNoValueTF{#1}
    {\textcolor{mymathcolor}{\latexsqrt{\textcolor{current}{#2}}}}%
    {\textcolor{mymathcolor}{\latexsqrt[#1]{\textcolor{current}{#2}}}}%
}

\newcommand\ovl[1]{\colorlet{temp}{.}\color{mymathcolor}\overline{\color{temp}#1}\color{temp}}
\newcommand\mystar{\colorlet{temp}{.}\color{mymathcolor}\star\color{temp}}

\newcommand\mymod{\colorlet{temp}{.}\color{mymathcolor}\mod\color{temp}}
\newcommand\mybmod{\colorlet{temp}{.}\color{mymathcolor}\bmod\color{temp}}

\def\hat{\mathaccent "705E\relax}

\makeatletter
\DeclareRobustCommand{\cgenfrac}[5]{%
  \def\@tempa{#2#3}%
  \edef\@tempb{%
    \@nx\@cgenfrac{#1}%
    \@mathstyle{#5}%
    \csname
      @@%
      \ifx @#4@over\else above\fi
      \ifx\@tempa\@empty\else withdelims\fi
    \endcsname
  }%
  \@tempb{#2#3#4}%
}
\newcommand\@cgenfrac[6]{%
  {#2{\colorlet{current}{.}\begingroup #5\endgroup\color{#1}#3#4\relax\color{current}#6}}%
}
\makeatother

\newcommand{\cbfrac}[3][mymathcolor]{\cgenfrac{#1}{}{}{}{}{#2}{#3}}

\AtBeginDocument{
  \renewcommand{\setminus}{\mathbin{\backslash}}%
}

\usepackage{graphicx}

\usepackage[tracking=false]{microtype}
\newcommand*\GetTextWidth[3][\normalfont]{{#1%
    \settowidth{#2}{abcdefghijklmnopqrstuvwxyz}%
    \setlength{#2}{0.03193#2}%
    \addtolength{#2}{0.44961pt}%
    \setlength{#2}{#3#2}%
    \global#2=#2}}
\newlength\bringhurstwdt
\GetTextWidth{\bringhurstwdt}{100}
\usepackage{geometry}
\usepackage{lipsum} 
\usepackage{caption}
\usepackage[position=b]{subcaption}
\usepackage{xfrac}
\usepackage{multicol}
\usepackage{multirow}
\usepackage[inline]{enumitem}
\usepackage{ellipsis}
\usepackage{float}
\usepackage{booktabs}
\usepackage{blkarray,bigstrut}
\usepackage{comment}
\usepackage{soul}
\usepackage[normalem]{ulem}
\usepackage{array}
\usepackage{marginnote}
\usepackage{wrapfig}
\usepackage[export]{adjustbox}
\usepackage{stmaryrd}
\usepackage{accents}
\usepackage{mleftright}
\usepackage{cancel}
\usepackage{centernot}

\usepackage{polynom}
\usepackage{tasks}
\usepackage{listings}
\lstdefinestyle{sagecode}{
    keywordstyle=\color{blue},
    commentstyle=\color{green!50!black},
    numberstyle=\tiny,
    basicstyle=\ttfamily,
    language=Python,
    morekeywords={%
        arrow,
        plot,
        parametric_plot,
        text,
        points},
    deletekeywords=[2]{min,max},
    xleftmargin=-38pt
}

\usepackage{varwidth}

\usepackage{tikz}
\usepackage{pgfplots}
\usetikzlibrary{arrows, arrows.meta, decorations.markings}
\usetikzlibrary{positioning}
\usetikzlibrary{calc,intersections,through,backgrounds}
\usetikzlibrary{quotes}
\usetikzlibrary{angles}
\usetikzlibrary{bending}
\usetikzlibrary{tikzmark}
\pgfplotsset{compat=1.18}
\usepgfplotslibrary{groupplots}

\usepackage{anyfontsize}
\usepackage{titlesec} 

\usepackage{stackengine}

\usepackage[multiple]{footmisc}

\usepackage{empheq}
\usepackage[linesnumbered,ruled,vlined]{algorithm2e}

\SetCommentSty{mycommfont}

\SetKwInput{KwInput}{Input}                
\SetKwInput{KwOutput}{Output}              

\theoremstyle{definition}
\newtheorem{proposition}{Proposition}
\newtheorem{conjecture}{Conjecture}
\newtheorem{theorem}{Theorem}
\newtheorem{lemma}{Lemma}

\newcommand{\F}{\mathbb{F}}

\newcommand{\Z}{\mathbb{Z}}

\newcommand{\N}{\mathbb{N}}

\makeatletter
\DeclareFontFamily{U}{tipa}{}
\DeclareFontShape{U}{tipa}{m}{n}{<->tipa10}{}
\newcommand{\arc@char}{{\usefont{U}{tipa}{m}{n}\symbol{62}}}%

\newcommand{\arc}[1]{\mathpalette\arc@arc{#1}}

\newcommand{\arc@arc}[2]{%
  \sbox0{$\m@th#1#2$}%
  \vbox{
    \hbox{\resizebox{\wd0}{\height}{\arc@char}}
    \nointerlineskip
    \box0
  }%
}
\makeatother
\renewcommand{\le}{\leqslant}
\usepackage[most]{tcolorbox}
\usepackage{hyperref}
\hypersetup{
    colorlinks,
    linkcolor={red!50!black},
    citecolor={blue!50!black},
    urlcolor={blue!80!black},
    bookmarksdepth=2
}
\makeatletter
\def\toclevel@part{-1}
\def\toclevel@chapter{0}
\def\toclevel@section{1}
\def\toclevel@subsection{2}
\def\toclevel@subsubsection{3}
\makeatother
\usepackage{bookmark}
\bookmarksetup{open,numbered,depth=2}
\makeatletter
\providecommand*\toclevel@part{-1}
\providecommand*\toclevel@chapter{0}
\providecommand*\toclevel@section{1}
\providecommand*\toclevel@subsection{2}
\providecommand*\toclevel@subsubsection{3}
\makeatother
\tcbuselibrary{breakable}

\newtcbtheorem[]{mytheo}{Theorem}%
    {%
        breakable,%
        colback=green!5,%
        colframe=green!35!black,%
        fonttitle=\bfseries,
        breakable
    }%
    {th}

\newtcbtheorem[]{myaxiom}{Axiom}%
    {%
        breakable,%
        colback=orange!5,%
        colframe=orange!35!black,%
        fonttitle=\bfseries,
        breakable
    }%
    {axiom}

\newtcbtheorem[]{myconj}{Conjecture}%
    {%
        breakable,%
        colback=magenta!5,%
        colframe=magenta!35!black,%
        fonttitle=\bfseries,
        breakable
    }%
    {conj}

\newtcbtheorem[]{myprop}{Proposition}%
    {%
        breakable,%
        colback=green!5,%
        colframe=green!35!black,%
        fonttitle=\bfseries,
        breakable
    }%
    {prop}

\newtcbtheorem[]{mylemma}{Lemma}%
    {%
        breakable,%
        colback=green!5,%
        colframe=green!35!black,%
        fonttitle=\bfseries,
        breakable
    }%
    {lemma}

\newtcbtheorem[]{mycor}{Corollary}%
    {%
        breakable,%
        colback=pink!5,%
        colframe=pink!35!black,%
        fonttitle=\bfseries,
        breakable
    }%
    {corol}

\newtcbtheorem[]{mydef}{Definition}%
    {%
        breakable,%
        colback=orange!5,%
        colframe=orange!35!black,%
        fonttitle=\bfseries,
        breakable
    }%
    {df}

\newtcbtheorem[]{myex}{Example}%
    {%
        breakable,%
        colback=magenta!5,%
        colframe=magenta!35!black,%
        fonttitle=\bfseries,
        breakable
    }%
    {ex}

\newtcbtheorem[]{myexer}{Exercise}%
    {%
        breakable,%
        colback=magenta!5,%
        colframe=magenta!35!black,%
        fonttitle=\bfseries,
        breakable
    }%
    {exer}

\newtcbtheorem[]{myprob}{Problem}%
    {%
        breakable,%
        colback=magenta!5,%
        colframe=magenta!35!black,%
        fonttitle=\bfseries,
        breakable
    }%
    {prob}

\newtcblisting[auto counter]{mylisting}[2][]{%
    sharp corners,
    fonttitle=\bfseries,
    colframe=gray,
    detach title,
    listing only,
    breakable,
    listing options={style=sagecode},
}
\newcommand{\tc}{\textcolor{black}{\text{,}}}

\newcommand{\td}{\textcolor{black}{\text{.}}}

\newcommand{\mt}[1]{\textcolor{black}{\text{#1}}}

\usepackage[maxbibnames=6,sorting=anyt,giveninits=true]{biblatex}
\title{Isomorphism Criterion of Monomial Digraphs over Prime Fields}

\author{
    \begin{tabular}{ccc}
        Alexander M.\ Kodess & Felix Lazebnik & Mikhail Muzychuk\\
        \small{Department of Mathematics} &
        \small{Department of Math.\ Sciences} &
        \small{Department of Mathematics}\\
        \small{Farmingdale State College} &
        \small{University of Delaware} &
        \small{Ben-Gurion University of the Negev}\\
        \small{Farmingdale, NY, USA} &
        \small{Newark, DE, USA} &
        \small{Beer-Sheva, Israel}\\
        \href{mailto:kodessa@farmingdale.edu}%
        {\small{\texttt{kodessa@farmingdale.edu}}} &
        \href{mailto:fellaz@udel.edu}%
        {\small{\texttt{fellaz@udel.edu}}} &
        \href{mailto:muzychuk@bgu.ac.il}%
        {\small{\texttt{muzychuk@bgu.ac.il}}}
    \end{tabular}
}

\begin{document}

\maketitle

\textit{This article is dedicated to the memory of Iosif Yakovlevich Rivkind (1952--2026), an
extraordinary teacher of mathematics and computer science.}

\begin{abstract}
For any 
Galois field
$ \F_q $ 
with 
$ q $ 
elements and any positive integers 
$ m $ 
and 
$ n $, 
the directed graph
$ D\mleft( q ; m , n \mright) $ 
has vertex set 
$ \F_q \times \F_q $, 
and there is an arc from vertex
$ \mleft( x_1 , x_2 \mright) $ 
to vertex
$ \mleft( y_1 , y_2\mright) $ 
if and only if
$ x_2 + y_2 = x_1^m y_1^n $.
It was conjectured in earlier work 
that two digraphs 
$ D\mleft( q ; m_1 , n_1 \mright) $
and
$ D\mleft( q ; m_2 , n_2\mright) $
are isomorphic if and only if there exists an integer 
$ k $
relatively prime to  
$ q - 1 $ 
such that 
$ m_2 \equiv  k m_1 $
and
$n_2 \equiv  k n_1 $, 
where both congruences are modulo 
$ q - 1 $. 
We prove this conjecture over prime fields and 
construct an infinite family of counterexamples 
over extension fields.
\end{abstract}

\section{Introduction}
Let
$ \F_q $
denote 
the finite field with
$ q = p^e $ elements, where $ p $ is prime. 
For a subset $ A $ of $ \F_q $ we will denote 
$ A \setminus \mleft\{ 0 \mright\} $ 
by $ A^{ \mystar } $. 
Our reference on finite fields is
Lidl and Niederreiter~\cite{lidl1997finite}.

Since 
$ x^{ q - 1 } = 1 $ 
for every 
$ x \in \F_q^{ \mystar } $, 
when we write 
$ x^t $ 
for 
$ x \in \F_q $ 
and 
$ t \in \N $,  
we may assume that 
$ 1 \le t \le q - 1 $, 
unless stated otherwise. 
Also, if 
$ \gcd\mleft( q - 1 , t \mright) = 1 $, 
then there exists a unique 
$ t' $, 
$ 1 \le t' \le q - 1 $,  
such that 
$ t t' \equiv 1 \mybmod q - 1 $. 
We will denote such 
$ t' $ 
by 
$ t^{ -1 } $. 
Unless stated otherwise, all congruences are modulo $ q - 1 $.
Whenever $ q $ is fixed and $ x $ is an integer, we shall write
$ \ovl{ x } $ 
for
$ \gcd(x,  q-1 ) $,  
and for the system of congruences 
$ km_1 \equiv m_2 \mybmod q - 1 $ 
and 
$ k n_1 \equiv n_2 \mybmod q - 1 $, 
we will just write 
$ k \left( m_1 , n_1 \right) \equiv \left( m_2 , n_2 \right) $.

In this paper we consider a class of directed graphs.
A \emph{directed graph} (or just \emph{digraph}) 
$ D = ( V , A ) $ 
consists of
a finite non-empty vertex set 
$ V = V( D ) $
and a finite arc set 
$ A = A( D ) $ 
of ordered pairs of not necessarily distinct vertices. 
Note that our digraphs allow loops, but not parallel arcs. 

For a digraph 
$ D $, 
we will denote by 
$ D^{ \operatorname{con} } $
its \emph{converse digraph} 
obtained by reversing every arc of 
$ D $.

If 
$ u , v \in V $ and $(u,v)$ is an arc of $D$, we say that 
the arc $(u,v)$, also denoted as $u \to v$,   is from $u$ to $v$, 
and in this case 
$ v $ 
is called an \emph{out-neighbor} of 
$ u $, 
and 
$ u $ 
is called an \emph{in-neighbor} of 
$ v $. 
For a vertex 
$ x $ 
of 
$ D $, 
by 
$ D^{ + } ( x ) $ 
(resp.\ $ D^{ - } ( x ) $), 
we denote the set of all 
out-neighbors (resp.\ in-neighbors) of $ x $. 
We call the set $D^{ + } (x)$ (resp.\ $D^{ - }  (x)$) 
the \emph{out-neighborhood} (resp.\ the \emph{in-neighborhood}) 
of $x$ in $D$.  
Equivalently, 
\[
    D^{ + }( x )
    = 
    \mleft\{
        v \in V 
        :
        ( x , v ) \in A
    \mright\}
    \qquad 
    \mt{and}
    \qquad 
    D^{ - }( x )
    = 
    \mleft\{
        v \in V
        :
        ( v , x ) \in A
    \mright\}%
    \td
\]
For all graph- or digraph-theoretic terms not explicitly defined
here we refer the reader
to
Bang-Jensen and Gutin~\cite{bang2008digraphs,bang2018classes}
or to  Gross, Yellen, and Zhang~\cite{grossgraphth2013}. 

Let 
$ q $ 
be a prime power and let 
$ m , n $ 
be positive integers. 
The digraphs 
$ D( q ; m , n ) $ 
are the main objects of our study, and they are defined as follows.
The vertex set of 
$ D( q ; m , n ) $ 
is 
$ \F_q^2 = \F_q \times \F_q $, 
and for vertices 
$ x =( x_1 , x_2 ) $
and 
$ y =( y_1 , y_2 ) $, 
$( x , y) $
 is
an arc in 
$ D( q ; m , n ) $ 
if and only if
\begin{equation}
    \label{eqn:dqmn_adjacency_condition}
    x_2 + y_2 = x_1^m y_1^n%
    \td
\end{equation}

\begin{figure}[H]
    \centering
        \begin{tikzpicture}

        \tikzset{vertex/.style = {shape=circle,draw,inner sep=2pt,minimum size=.5em, scale = 1.0},font=\sffamily\scriptsize\bfseries}
        \tikzset{edge/.style = {->,> = triangle 45}}
        \node[vertex] (a) at  (2,0) {$(1,0)$};
        \node[vertex] (b) at  (5,0) {$(0,0)$};
        \node[vertex] (c) at  (8,0) {$(2,0)$};

        \node[vertex] (d) at  (0,-2) {$(2,1)$};
        \node[vertex] (e) at  (2,-2) {$(1,1)$};
        \node[vertex] (f) at  (8,-2) {$(2,2)$};
        \node[vertex] (g) at  (10,-2) {$(1,2)$};

        \node[vertex] (h) at  (2,-4) {$(0,2)$};
        \node[vertex] (i) at  (8,-4) {$(0,1)$};

        \draw[edge] (a) to (b);
        \draw[edge] (a) to (d);
        \draw[edge] (a) to (e);

        \draw[edge] (b) to (a);
        \draw[edge] (b) to (c);

        \draw[edge] (c) to (b);
        \draw[edge] (c) to (f);
        \draw[edge] (c) to (g);

        \draw[edge] (d) to (e);
        \draw[edge] (d) to (h);

        \draw[edge] (e) to (a);
        \draw[edge] (e) to (c);
        \draw[edge] (e) to (h);

        \draw[edge] (f) to (a);
        \draw[edge] (f) to (c);
        \draw[edge] (f) to (i);

        \draw[edge] (g) to (f);
        \draw[edge] (g) to (i);

        \draw[edge] (h) to (d);
        \draw[edge] (h) to (e);
        \draw[edge] (h) to (i);

        \draw[edge] (i) to (f);
        \draw[edge] (i) to (g);
        \draw[edge] (i) to (h);

        \path
        (b) edge [->,>={triangle 45[flex,sep=0pt]},loop,out=240,in=270,looseness=6.5] node {} (b);
        \path
        (d) edge [->,>={triangle 45[flex,sep=0pt]},loop,out=160,in=130,looseness=8] node {} (d);
        \path
        (g) edge [->,>={triangle 45[flex,sep=0pt]},loop,out=20,in=50,looseness=8] node {} (g);
        \end{tikzpicture}
        \caption{The monomial digraph $D(3;1,2)$: $x_2+y_2 = x_1y_1^2$.}
        \label{dig_312}
\end{figure}

In view of the presence of a monomial in this adjacency 
condition~\eqref{eqn:dqmn_adjacency_condition}, 
we shall 
refer to the digraphs 
$ D( q ; m , n ) $ 
as 
\emph{monomial digraphs}.

Whenever the values of 
$ q $, 
$ m $, 
and 
$ n $ 
are fixed,
we shall often write
$ D $
for
$ D( q; m, n ) $.

Clearly, for 
$ D = D( q ; m , n ) $ 
and 
$ \mleft( a , b \mright) \in V\mleft( D \mright) $, 
simplifying the notation, 
we have 
\begin{align}
    D^{ + }( (a , b) ) 
    = 
    D^{ + }( a , b ) 
    &=
    \bigl\{\,
        ( 
            x 
            , 
            - b + a^m x^n 
        )
        :
        x \in \F_q
    \,\bigr\}
    \;\;
    \text{\rm and}
    \label{eqn:general_neighborhood}
    \\
    D^{ - }( (a , b) )
    = 
    D^{ - }( a , b ) 
   &= 
    \bigl\{\,
        ( x,-b + x^m a^n )
        :
        x \in \F_q
    \,\bigr\}%
    \td
    \nonumber
\end{align}

The following properties of 
$ D = D( q ; m , n ) $ 
hold for all parameter values. 
    \begin{itemize}
        \item
            The arc from vertex 
            $ ( 0 , 0 ) $ 
            to itself is present in 
            $ D $, 
            and so 
            $ D $ 
            always contains at least one loop.
        \item
            The in-degree and out-degree of every vertex
            in 
            $ D $ 
            are both equal to  
            $ q $. 
            Also, 
            $ D $ 
            has 
            $ q^2 $ 
            vertices and 
            $ q^3 $ 
            arcs.
    \end{itemize}
    
The digraphs
$ G $
and
$ H $
are called \emph{isomorphic}
(denoted by $ G \cong H $)
if there exists a bijection
$ \varphi \colon  V (G) \to V (H) $
such that, for every $x,y \in V(G)$, 
$ \mleft( x , y \mright) \in A( G ) $
if and only if
$  \mleft( \varphi( x ) , \varphi( y ) \mright) \in A( H ) $.
The mapping
$ \varphi $
is called an \emph{isomorphism} from 
$ G $ 
to 
$ H $.
As we shall see, some of the digraphs 
$ D( q ; m , n ) $
belong to the same isomorphism class.
Note that as
$ m $
and
$ n $
vary from
$ 1 $
to
$ q - 1 $,
for any fixed
$ q $
we obtain
$ ( q - 1 )^2 $
monomial digraphs (each with vertex set
$\F_q \times \F_q $).
The problem of classifying all these 
$ \mleft( q - 1 \mright)^2 $ 
digraphs
up to isomorphism was initiated in Kodess~\cite{kod14},  and it is the primary goal of this paper.
  
For a fixed 
$ q $, 
let 
$ D_i = D( q; m_i , n_i ) $, 
$ i = 1,2 $.
The following proposition gives a sufficient condition for 
$ D_1 $ 
and 
$ D_2 $ 
to be isomorphic. 
\begin{proposition}[\cite{KL2017}]
\label{proposition:sufficiency} 
Let 
$ q $ 
be a prime power. 
If there exists 
$ k $, 
$ \ovl{k} = 1 $, 
such that  
$ 
\left( m_2 , n_2 \right) 
\equiv 
k
\left( m_1 , n_1 \right) 
$, 
then  
$   
D_1\cong D_2
$.
\end{proposition}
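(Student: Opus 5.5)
The natural approach is to write down an explicit bijection $\varphi \colon \F_q^2 \to \F_q^2$ and check directly that it carries arcs of $D_1 = D(q;m_1,n_1)$ to arcs of $D_2 = D(q;m_2,n_2)$. Since $\ovl{k}=1$, the power map $\pi_k \colon x \mapsto x^k$ is a permutation of $\F_q$: it fixes $0$ and restricts to an automorphism of the cyclic group $\F_q^{\mystar}$ of order $q-1$. I would therefore take $\varphi(x_1,x_2) = (x_1^{k'}, x_2)$, where $k' = k^{-1}$ in the paper's notation. The point is that $y_1 = x_1^{k'}$ turns the monomial $x_1^{m_2}$ into $x_1^{k' m_2}$, and we want $k' m_2 \equiv m_1$. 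This holds because $k m_1 \equiv m_2$ gives $k' m_2 \equiv k'k m_1 \equiv m_1$.

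The verification goes as follows. Let $x=(x_1,x_2)$, $y=(y_1,y_2)$, and write $u = x_1^{k'}$, $v = y_1^{k'}$. Then $(\varphi(x),\varphi(y))$ is an arc of $D_2$ if and only if
\[
x_2 + y_2 = u^{m_2} v^{n_2} = x_1^{k' m_2} y_1^{k' n_2}.
\]
If $x_1,y_1 \neq 0$, exponents may be reduced modulo $q-1$. So $x_1^{k'm_2} = x_1^{m_1}$ and $y_1^{k'n_2} = y_1^{n_1}$, and the condition becomes exactly $x_2+y_2 = x_1^{m_1}y_1^{n_1}$, which is adjacency in $D_1$.

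The only care needed is when $x_1=0$ or $y_1=0$, because reducing exponents modulo $q-1$ is not valid at $0$ when an exponent could be $0$. Here we use the paper's convention that all exponents lie in $\{1,\dots,q-1\}$, so $m_i,n_i,k,k' \ge 1$. Hence $0^{m_1} = 0^{m_2} = 0$, $0^{n_1}=0^{n_2}=0$, and $0^{k'}=0$. In this case both sides of both adjacency conditions equal $x_2+y_2=0$, so the equivalence still holds. Bijectivity of $\varphi$ follows from bijectivity of $\pi_{k'}$, whose inverse is $\pi_k$.

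There is no real obstacle; the only delicate point is the exponent-reduction issue at $0$, which the positivity convention resolves. If the residues $m_2, n_2$ are taken in $[1,q-1]$, then $x^{k'm_2}$ and $x^{m_1}$ agree on all of $\F_q$, since both vanish at $0$ and agree on $\F_q^{\mystar}$. This completes the argument that $D_1\cong D_2$.
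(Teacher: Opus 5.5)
Your proof is correct. The map $(x_1,x_2)\mapsto(x_1^{k'},x_2)$ with $kk'\equiv 1 \pmod{q-1}$ is a bijection. The congruences $k'm_2\equiv m_1$ and $k'n_2\equiv n_1$ settle the case of nonzero first coordinates, and positivity of all exponents settles the zero case, so arcs correspond in both directions.

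The paper does not reprove this statement; it only cites \cite{KL2017}. Your explicit first-coordinate power-map isomorphism is the natural argument for it, in the same spirit as the paper's own explicit maps $\psi_c$ and $(\sigma(x),T(y))$ in Section~\ref{section:infinite_counterexamples}.
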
 

Kodess and Lazebnik \cite{KL2017} conjectured
that the converse of 
Proposition~\ref{proposition:sufficiency} 
is true, that is, the following holds. 
\begin{conjecture}
\label{false_conjecture} 
Let 
$ q $ 
be a prime power. 
If $ D_1 \cong D_2 $, then there exists 
$ k $, 
$ \ovl{k} = 1 $, 
such that  
$ 
    \mleft( m_2 , n_2 \mright) 
    \equiv 
    k
    \mleft( m_1 , n_1 \mright) 
$.
\end{conjecture}
It turned out to be false in general:
digraphs 
$ D( 25 ; 3 , 6 ) $ 
and 
$ D( 25 ;3 , 18 ) $ 
are isomorphic, but there is no integer 
$k$ such that $ (3,18) \equiv  k (3,6) \mybmod 24 $. 
(We acknowledge our previously mistaken computation in~\cite{KL2017}, 
which overlooked this pair.) 
This counterexample was extended to an infinite family which we describe and prove later.
Nevertheless, we prove the conjecture in the special case of prime $ q $.
\begin{theorem}
\label{theorem:prime_classification}
Let
$ q = p $
be prime. 
If
$ D_1 \cong D_2 $,
then there exists an integer
$ k $, 
$ \ovl{k}=1 $,
such that
$
    \mleft(m_2,n_2\mright)
    \equiv
    k
    \mleft(m_1,n_1\mright)%
    \td
$
\end{theorem}
This paper is organized as follows. 
In Section~\ref{section:auxiliary} 
we present some known conditions on parameters 
$ m_i,n_i $, 
$ i = 1 , 2 $, 
which are necessary for 
$ D_1 \cong D_2 $, 
as well as some properties of isomorphisms 
of monomial digraphs.
In Section~\ref{section:main_proofs} 
we prove Theorem~\ref{theorem:prime_classification}.
In Section~\ref{section:infinite_counterexamples} 
we give an infinite class of counterexamples to the converse of 
Proposition~\ref{proposition:sufficiency}, 
and in Section~\ref{section:concluding_remarks} 
we present concluding remarks. 
%
\section{Preliminary Results}
\label{section:auxiliary}
For several statements in this section, we refer the reader to previously published articles. We include short proofs and comments for a few results in order to illustrate the techniques used later.

Now we present some necessary conditions that the parameters of 
isomorphic monomial digraphs must satisfy. 
The statements in Proposition~\ref{proposition:cond_on_parameters}
first appeared in~\cite{KL2017}. 

\begin{proposition}[\cite{KL2017}]
\label{proposition:cond_on_parameters}
Let $ q $ be an odd prime power.
If
$ D_1 \cong D_2 $, 
then:
\begin{enumerate}[label=(\roman*)]
\item 
    \label{item:equal_bars}
    $ \ovl{ m_1 } = \ovl{ m_2 } $
    and
    $ \ovl{ n_1 } = \ovl{ n_2 } $.
\item 
    \label{item:equal_parameters}
    If   
    $ m_1 = n_1 $,
    then
    $ m_2 = n_2 $
    and
    $ \ovl{ m_1 } = \ovl{ m_2 } $.
\end{enumerate} 
\end{proposition}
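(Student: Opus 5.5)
The plan is to read off $\ovl{m}$ and $\ovl{n}$ from isomorphism invariants built from common out- and in-neighbourhoods, computed with the explicit description~\eqref{eqn:general_neighborhood}. Part~\ref{item:equal_parameters} will then follow from a symmetry argument.

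\emph{Part~\ref{item:equal_bars}.} Let $D=D(q;m,n)$ and take two distinct vertices $u=(a,b)$ and $v=(c,d)$. By~\eqref{eqn:general_neighborhood}, a vertex $(x,y)$ lies in $D^{+}(u)\cap D^{+}(v)$ if and only if $y=-b+a^m x^n$ and
\[
    (a^m-c^m)\,x^n = b-d .
\]
Recall that $x^n=t$ has exactly one solution when $t=0$, and either $0$ or $\ovl{n}$ solutions when $t\neq 0$. This gives four cases for $|D^{+}(u)\cap D^{+}(v)|$:
\begin{itemize}
\item[(1)] it equals $q$ if $a^m=c^m$ and $b=d$ (then $a\ne c$, since $u\ne v$);
\item[(2)] it equals $0$ if $a^m=c^m$ and $b\neq d$;
\item[(3)] it equals $1$ if $a^m\neq c^m$ and $b=d$;
\item[(4)] it lies in $\{0,\ovl{n}\}$ otherwise.
\end{itemize}
Since $\ovl{n}\le q-1<q$, a pair with exactly $q$ common out-neighbours occurs only in case (1).

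Next count such ordered pairs. We have $a^m=c^m$ with $a\ne c$ only for $a,c\ne 0$, and each $a\in\F_q^{\mystar}$ has exactly $\ovl{m}-1$ partners $c\neq a$. Hence the number of ordered pairs $(u,v)$ of distinct vertices with $q$ common out-neighbours is
\[
    q(q-1)\bigl(\ovl{m}-1\bigr).
\]
This number is an isomorphism invariant and is a strictly increasing function of $\ovl{m}$, so $D_1\cong D_2$ forces $\ovl{m_1}=\ovl{m_2}$.

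The same computation for in-neighbourhoods uses $D^{-}(a,b)=\{(x,-b+x^m a^n)\}$. The common in-neighbours of $(a,b),(c,d)$ are the solutions of $x^m(a^n-c^n)=b-d$, and counting pairs with $q$ common in-neighbours gives $q(q-1)(\ovl{n}-1)$. Hence $\ovl{n_1}=\ovl{n_2}$. An isomorphism preserves both in- and out-neighbourhoods, so both counts are invariants.

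\emph{Part~\ref{item:equal_parameters}.} If $m_1=n_1$, the adjacency condition $x_2+y_2=x_1^{m_1}y_1^{m_1}$ is symmetric in $x$ and $y$. So $D_1=D_1^{\operatorname{con}}$: whenever $(x,y)$ is an arc, so is $(y,x)$. This property is preserved by isomorphism, so $D_2$ has it as well.

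Now specialise to $y=(1,0)$ and $x=(t,t^{m_2})$ with $t\in\F_q^{\mystar}$. Then $(x,y)$ is an arc of $D_2$, because $t^{m_2}+0=t^{m_2}\cdot 1^{n_2}$. The reverse arc $(y,x)$ requires $0+t^{m_2}=1^{m_2}t^{n_2}$, i.e.
\[
    t^{m_2}=t^{n_2}\quad\text{for all } t\in\F_q^{\mystar}.
\]
Applying this to a primitive element gives $m_2\equiv n_2 \pmod{q-1}$. Since $1\le m_2,n_2\le q-1$, this means $m_2=n_2$. The equality $\ovl{m_1}=\ovl{m_2}$ is already part~\ref{item:equal_bars}.

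The main obstacle is choosing an invariant in part~\ref{item:equal_bars} that isolates $\ovl{m}$ cleanly from $\ovl{n}$. The key observation is that the maximal possible overlap $q$ can only come from case (1), because $\ovl{n}<q$. Everything else is routine counting.
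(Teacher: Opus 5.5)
Your proof is correct. The paper does not prove this proposition; it only quotes it from \cite{KL2017}, so there is no in-paper argument to compare against, and yours stands as a complete, self-contained proof.

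For (i), your invariant is the number of ordered pairs of distinct vertices with identical out-neighbourhoods (resp.\ in-neighbourhoods). It is preserved by any isomorphism and equals $q(q-1)(\ovl{m}-1)$ (resp.\ $q(q-1)(\ovl{n}-1)$). Your case analysis of $(a^m-c^m)x^n=b-d$ is right: every case other than $a^m=c^m$, $b=d$ yields at most $\ovl{n}\le q-1$ common out-neighbours, so overlap $q$ occurs only in that case.

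For (ii), you note that $D(q;m,m)$ is closed under arc reversal, which is an isomorphism invariant. The test arc $(t,t^{m_2})\to(1,0)$ then forces $t^{m_2}=t^{n_2}$ on $\F_q^{\mystar}$, and hence $m_2=n_2$ under the normalization $1\le m_2,n_2\le q-1$.

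Your neighbourhood-intersection device is the same one the paper uses in Case~2 of the proof of Theorem~\ref{theorem:prime_classification}, where it characterizes pairs of vertices with disjoint in-neighbourhoods through the same equation $x^{m_i}(a^{n_i}-c^{n_i})=b-d$. Your route has two advantages. It needs no structural information about the isomorphism, unlike Proposition~\ref{proposition:f_and_g}, which rests on \cite[Theorem~2.1]{KL2017}. It also never uses that $q$ is odd, so both parts in fact hold for every prime power $q$.
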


To simplify notation, 
for an isomorphism 
$ \varphi \colon D_1 \to D_2 $ 
we shall write 
$ \varphi\mleft( x , y \mright) $ 
for 
$ \varphi\mleft( \mleft( x , y \mright) \mright) $, 
where 
$ \mleft( x , y \mright) \in V\mleft( D_1 \mright) $.

We will first need  the following lemma, which 
appeared in
Viglione~\cite{ViglioneThesis}
and 
in Dmytrenko, Lazebnik, and Viglione~\cite{dlv}. 
\begin{lemma}[\cite{ViglioneThesis}, \cite{dlv}]
\label{lemma:dirichlet}
\it{
Let 
$ a $, 
$ b $, 
and 
$ c \neq 0 $ 
be integers. 
If 
$ \gcd( a , c ) = \gcd( b , c ) $, 
then  there exists an integer 
$ k $ 
such that 
$ \gcd( k , c ) = 1 $ 
and 
$ k a  \equiv b \mybmod c $.
}
\end{lemma}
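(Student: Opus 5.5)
Let $d=\gcd(a,c)=\gcd(b,c)$. Replacing $c$ by $|c|$ changes neither the hypothesis nor the conclusion, so we may assume $c>0$. The plan is to first solve the congruence modulo $c/d$, where $a/d$ is invertible. We then lift the solution to an integer coprime to $c$ using the Chinese Remainder Theorem; Dirichlet's theorem on primes in progressions would also do this, which explains the lemma's label. Write $a=da'$, $b=db'$ and $c=dc'$, so that $\gcd(a',c')=\gcd(b',c')=1$. This remains valid in the degenerate case $a=0$ or $b=0$: then $d=c$, $c'=1$, and everything below is trivial.

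\textbf{Step 1.} Since $\gcd(a',c')=1$, $a'$ has an inverse modulo $c'$, and we set $k_0\equiv b'(a')^{-1} \pmod{c'}$. Then $k_0a'\equiv b'\pmod{c'}$. Multiplying by $d$ gives $k_0a\equiv b\pmod{c}$. Moreover $\gcd(k_0,c')=1$, because $b'$ and $(a')^{-1}$ are both units modulo $c'$.

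\textbf{Step 2.} The key observation is that any $k\equiv k_0\pmod{c'}$ still satisfies $ka\equiv b\pmod c$. Indeed, $(k-k_0)a=(k-k_0)da'$ is divisible by $c'd=c$. So it suffices to find such a $k$ with $\gcd(k,c)=1$.

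\textbf{Step 3.} This lifting is the only real point, and it is mild. Let $P$ be the set of primes dividing $c$ but not $c'$, and let $r=\prod_{p\in P}p$, with $r=1$ if $P$ is empty. By construction $\gcd(r,c')=1$. By the Chinese Remainder Theorem we choose $k$ with $k\equiv k_0\pmod{c'}$ and $k\equiv 1\pmod{r}$.

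Now let $p$ be any prime divisor of $c$. If $p\mid c'$, then $p\nmid k$, since $k\equiv k_0$ modulo $p$ and $\gcd(k_0,c')=1$. If $p\nmid c'$, then $p\in P$ and $k\equiv1\pmod p$. Hence $\gcd(k,c)=1$. Together with Step 2, this $k$ satisfies $ka\equiv b\pmod c$, which proves the lemma.

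Alternatively, Dirichlet's theorem gives a prime $k>c$ with $k\equiv k_0\pmod{c'}$, and such a prime is automatically coprime to $c$. The elementary CRT route avoids that heavy tool.
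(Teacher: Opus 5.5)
Your proof is correct. The paper does not prove this lemma itself; it only cites Viglione's thesis and Dmytrenko, Lazebnik, and Viglione. The lemma's label in the source, \emph{dirichlet}, suggests that the cited argument finishes with Dirichlet's theorem on primes in arithmetic progressions. That argument would start with the same reduction as your Steps~1--2: find $k_0$ with $k_0a'\equiv b' \pmod{c'}$, which forces $\gcd(k_0,c')=1$. It would then take $k$ to be a prime in the progression $k_0+c'\Z$ with $k>|c|$.

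Your Step~3 replaces that analytic input with the Chinese Remainder Theorem. In effect you prove directly that the reduction map $(\Z/c\Z)^{\times}\to(\Z/c'\Z)^{\times}$ is surjective. The primes of $c'$ are already avoided because $\gcd(k_0,c')=1$, and you force $k\equiv 1$ modulo the remaining primes of $c$. The shared part of the two arguments is identical; they differ only in how $k_0$ is lifted to a unit modulo $c$.

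Your route gives a self-contained, constructive argument with an explicit $k$. The Dirichlet route is shorter to state and gives the stronger conclusion that $k$ can be chosen prime and arbitrarily large. That strengthening is never used in the paper: Proposition~\ref{proposition:m_equal_n}, Case~1 of Theorem~\ref{theorem:prime_classification}, and Lemma~\ref{lemma:power_identities} need only $\gcd(k,q-1)=1$. Your handling of $c<0$ and of the degenerate cases $a=0$ or $b=0$ is also fine.
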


The two statements above imply 
Conjecture~\ref{false_conjecture} in 
the case when 
$ m_1 = n_1 $ (or $ m_2 = n_2 $).
\begin{proposition}
\label{proposition:m_equal_n}
Let $ q $ be an odd prime power. 
If 
$ D_1 \cong D_2 $ 
and 
$ m_1 = n_1 $, 
then 
$ \mleft( m_2 , n_2 \mright) \equiv k \mleft( m_1 , n_1 \mright) $ 
for some 
$ k $, 
$ \ovl{ k } = 1 $.
\end{proposition}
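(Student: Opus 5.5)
The plan is to combine Proposition~\ref{proposition:cond_on_parameters} with Lemma~\ref{lemma:dirichlet}. Since $ D_1 \cong D_2 $ and $ m_1 = n_1 $, part~\ref{item:equal_parameters} of Proposition~\ref{proposition:cond_on_parameters} gives $ m_2 = n_2 $ and $ \ovl{ m_1 } = \ovl{ m_2 } $. (Part~\ref{item:equal_bars} alone would also give $ \ovl{m_1} = \ovl{m_2} $, but we need the equality $ m_2 = n_2 $.) This reduces the two simultaneous congruences $ k m_1 \equiv m_2 $ and $ k n_1 \equiv n_2 $ to the single congruence $ k m_1 \equiv m_2 \mybmod q - 1 $.

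We then apply Lemma~\ref{lemma:dirichlet} with $ a = m_1 $, $ b = m_2 $ and $ c = q - 1 $. Here $ c \neq 0 $ because $ q $ is odd, so $ q - 1 \ge 2 $. The hypothesis $ \gcd( m_1 , q-1 ) = \gcd( m_2 , q-1 ) $ is exactly $ \ovl{m_1} = \ovl{m_2} $. The lemma therefore yields an integer $ k $ with $ \gcd( k , q - 1 ) = 1 $, that is $ \ovl{k} = 1 $, and $ k m_1 \equiv m_2 $.

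It remains to check the second coordinate: $ k n_1 = k m_1 \equiv m_2 = n_2 $. Hence $ \mleft( m_2 , n_2 \mright) \equiv k \mleft( m_1 , n_1 \mright) $, as required.

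There is no real obstacle here, since all the substance sits in the cited results; the one point to be careful about is invoking part~\ref{item:equal_parameters} rather than part~\ref{item:equal_bars}. If $ m_2 = n_2 $ is assumed instead of $ m_1 = n_1 $, the same argument applies with the roles of $ D_1 $ and $ D_2 $ exchanged, using that isomorphism is symmetric. This produces $ k' $ with $ \ovl{k'} = 1 $ and $ k'(m_2,n_2) \equiv (m_1,n_1) $, and then $ k = k'^{-1} $ works.
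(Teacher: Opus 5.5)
Your proof is correct and matches the paper's argument: part~(ii) of Proposition~\ref{proposition:cond_on_parameters} gives $m_2 = n_2$ and $\ovl{m_1} = \ovl{m_2}$, and Lemma~\ref{lemma:dirichlet} with $a = m_1$, $b = m_2$, $c = q-1$ then supplies the required $k$. Your closing remark on the symmetric case $m_2 = n_2$, taking $k = k'^{-1}$, is also correct but not needed.
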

\begin{proof}
Since $D_1\cong D_2$, 
part~(ii) of Proposition~\ref{proposition:cond_on_parameters} 
implies that 
$ m_2 = n_2 $ 
and 
$ \ovl{ m_1 } = \ovl{ m_2 } $. 
Then, by  
Lemma~\ref{lemma:dirichlet} with 
$ a = m_1 $, 
$ b = m_2 $, 
$ c = q - 1 $,   
$\mleft( 
        m_2 , m_2 
    \mright)
    \equiv 
    k
    \mleft( 
        m_1 , m_1 
    \mright) = k
    \mleft( 
        m_1 , n_1 
    \mright)$
for some 
$ k $, 
$ \ovl{ k } = 1 $. 
Therefore 
$ \mleft( m_2 , n_2 \mright) \equiv k \mleft( m_1 , n_1 \mright) $. 
\end{proof} 

The following properties of isomorphisms of monomial digraphs 
will be used later.

A polynomial
$ h \in \F_q [ X ] $
is called a
\emph{permutation polynomial}
on
$ \F_q $
if 
$ a \mapsto h\mleft( a \mright) $ 
is a bijection of 
$ \F_q $.

\begin{proposition}
\label{proposition:f_and_g}
Let 
$ q $ 
be 
an odd 
prime power, 
and suppose that 
$ m_1 \neq n_1 $, 
and 
$ D_1 \cong D_2 $
with an isomorphism given in the form
\begin{equation*}
    \varphi
    \colon
    V( D_1 )
    \longrightarrow
    V( D_2 )%
    \tc
    \qquad
    ( x , y )
    \longmapsto
    (
        f( x , y ),
        g( x , y )
    )
\end{equation*}
for some
$ f , g \in \F_q[ X , Y ] $
of degree at most
$ q - 1 $
in each of the variables.
Then the following statements hold.
\begin{enumerate}[label=(\roman*)]
\item 
    \label{prop:g_is_odd}
    $ g $
    is a permutation polynomial of indeterminate
    $ Y $
    only, 
    and 
    $ g \mleft( -y \mright) = -g\mleft( y \mright) $ 
    for all 
    $ y \in \F_q $. 
    In particular, 
    $ g\mleft( 0 \mright) = 0 $.
\item 
    \label{prop:f_x_0_is_0}
    $ f( x , y ) = 0 $
    if and only if
    $ x = 0 $. 
\item
    \label{prop:f_one_var_permutation}
    For every fixed           
    $ y \in \F_ q $, 
    $ x \mapsto f( x , y ) $ 
    is a permutation of $ \F_q $.
\item 
    \label{prop:normalized_isomorphism}
    There exists a \emph{normalized} isomorphism 
    $ 
        \hat{ \varphi } \colon D_1 \to D_2 
    $ 
    such 
    that 
    \begin{equation}
        \label{eqn:varphi1_10_mapsto_10}
        \hat{ \varphi }\mleft( 0 , 0 \mright) 
        =
        \mleft( 0 , 0 \mright)%
        \qquad 
        \mt{and}
        \qquad 
        \hat{ \varphi }\mleft( 1 , 0 \mright) 
        =
        \mleft( 1 , 0 \mright)%
        \td
    \end{equation}
\end{enumerate}
\end{proposition}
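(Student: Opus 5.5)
The plan is to find a purely digraph-theoretic characterization of the set $S=\{(0,c):c\in\F_q\}$ and of the horizontal \emph{layers} $L_c=\{(x,c):x\in\F_q\}$. An isomorphism must then preserve both, and all four statements follow from this.

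\textbf{Step 1: characterize $S$ and the layers.} By~\eqref{eqn:general_neighborhood}, $D^-(a,b)=\{(x,-b+a^n x^m)\}$ and $D^+(a',b')=\{(x,-b'+a'^m x^n)\}$. Both sets are graphs of functions of the first coordinate $x$. So $D^-(a,b)=D^+(a',b')$ holds if and only if
\[
a^n x^m - b = a'^m x^n - b' \qquad \mt{for all } x\in\F_q .
\]
Both sides are polynomials in $x$ of degree at most $q-1$, and $1\le m\neq n\le q-1$. Comparing coefficients therefore gives $a^n=0$, $a'^m=0$ and $b=b'$.

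Hence a vertex has an in-neighborhood equal to the out-neighborhood of some vertex if and only if it lies in $S$. For such a vertex, $D^-(0,c)=D^+(0,c)=L_{-c}$. Moreover, the layers are exactly the in-neighborhoods of the vertices of $S$. This is the only place where $m_1\neq n_1$ is used. We need it only in $D_1$: since the property is isomorphism-invariant and $D_1\cong D_2$, the set of vertices of $D_2$ having it is $\varphi(S)$, which has size $q$. Alternatively, Proposition~\ref{proposition:cond_on_parameters}\ref{item:equal_parameters} gives $m_2\neq n_2$ directly.

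\textbf{Step 2: derive (i)--(iii).} Since $\varphi$ preserves in- and out-neighborhoods, we get $\varphi(S)=S$. Then $\varphi$ maps each layer $D_1^-(0,c)$ onto the layer $D_2^-(\varphi(0,c))$.

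Because layers go to layers, the second coordinate $g(x,y)$ depends only on $y$. Bijectivity of $\varphi$ forces $y\mapsto g(y)$ to be a permutation, and it can be represented by a polynomial in $Y$ alone of degree at most $q-1$.

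Writing $\varphi(0,c)=(0,g(c))$, the in-neighborhood $L_{-c}$ maps to $L_{-g(c)}$. Since $L_{-c}$ maps to $L_{g(-c)}$, this gives $g(-c)=-g(c)$. As $q$ is odd, $g(0)=-g(0)$ forces $g(0)=0$. This proves (i).

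Next, $\varphi(S)=S$ says $f(0,y)=0$, and bijectivity of $\varphi$ on $S$ gives the converse, proving (ii). Finally, $\varphi$ restricted to $L_y$ is a bijection onto $L_{g(y)}$, so $x\mapsto f(x,y)$ is a permutation, proving (iii).

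\textbf{Step 3: normalization (iv).} By (i) and (ii), $\varphi(0,0)=(0,0)$ and $\varphi(1,0)=(t,0)$ with $t=f(1,0)\neq 0$. For $\lambda\in\F_q^{\mystar}$, the map
\[
\sigma_\lambda\colon (x,y)\mapsto (\lambda x,\lambda^{m_2+n_2}y)
\]
is an automorphism of $D_2$. Indeed, both sides of $x_2+y_2=x_1^{m_2}y_1^{n_2}$ are multiplied by $\lambda^{m_2+n_2}$. Setting $\hat\varphi=\sigma_{t^{-1}}\circ\varphi$ gives~\eqref{eqn:varphi1_10_mapsto_10}.

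The main obstacle is Step 1: identifying an isomorphism-invariant property that singles out $S$. The coefficient-comparison argument is what makes it work, and care is needed there with exponent reduction and the hypothesis $m\neq n$.
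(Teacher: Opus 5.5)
Your proof is correct. Parts (iii) and (iv) follow the paper exactly: (iii) comes from $\varphi$ mapping each layer $L_y$ bijectively onto $L_{g(y)}$, and (iv) from composing with the scaling automorphism $(x,y)\mapsto(\lambda x,\lambda^{m_2+n_2}y)$ of $D_2$.

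The difference lies in (i) and (ii). The paper does not prove these here; it cites \cite[Theorem~2.1]{KL2017}. You instead give a self-contained argument based on the isomorphism invariant ``$D^-(v)$ equals some out-neighborhood.'' For distinct $m,n\in\{1,\dots,q-1\}$, the functions $1,x^m,x^n$ are linearly independent on $\F_q$. This singles out exactly $S=\{(0,c):c\in\F_q\}$ and identifies the layers as the in-neighborhoods of vertices of $S$. Your route buys independence from the earlier paper. As a by-product, it also recovers $m_1\neq n_1\Rightarrow m_2\neq n_2$ without Proposition~\ref{proposition:cond_on_parameters}\ref{item:equal_parameters}. Your coefficient comparison actually forces $v=w$ whenever $D^-(v)=D^+(w)$, so the simpler invariant $D^-(v)=D^+(v)$ would already suffice.

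Two steps deserve one more explicit line each:
\begin{itemize}
\item In the counting argument, state that if $m_2=n_2$ then $D_2^-(v)=D_2^+(v)$ for every vertex $v$. The invariant set in $D_2$ would then have $q^2\neq q$ elements, which forces $m_2\neq n_2$ and lets Step~1 apply to $D_2$.
\item To conclude that $g$ \emph{is} a polynomial in $Y$ alone, rather than merely representable by one, invoke uniqueness of the polynomial representative of degree at most $q-1$ in each variable for a function $\F_q^2\to\F_q$.
\end{itemize}
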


\begin{proof}
Parts~\ref{prop:g_is_odd} 
and~\ref{prop:f_x_0_is_0} 
were proved 
in~\cite[Theorem~2.1]{KL2017}, 
and part~\ref{prop:f_one_var_permutation} 
follows from part~\ref{prop:g_is_odd} 
immediately. 
We, therefore, only prove 
part~\ref{prop:normalized_isomorphism}.

Let 
$ a = f\mleft( 1 , 0 \mright) \neq 0  $. 
Then 
$
    \varphi \mleft( 0 , 0 \mright)
    =
    \mleft( 0 , 0 \mright)%
$ 
and 
$
    \varphi \mleft( 1 , 0 \mright)
    =
    \mleft( a , 0 \mright)%
$.
The mapping 
$
    \hat{ \varphi }
    =
    \psi_{ a^{ - 1 } } \circ \varphi 
$, 
where
$
    \psi_c
    \mleft( x , y \mright)
    =
    \mleft(
        c x,c^{ m_2 + n_2 } y
    \mright)
$ 
is an obvious automorphism of
$ D_2 $, 
satisfies 
conditions~\eqref{eqn:varphi1_10_mapsto_10}.
\end{proof}

The following lemma will be used in the proof of 
Theorem~\ref{theorem:prime_classification}.
\begin{lemma}
\label{lemma:power_identities}
Let $ q $ be a prime power. 
Suppose there exists a permutation 
$ F $ 
of 
$ \F_q^{ \mystar } $ 
such that 
\begin{equation}
    \label{eqn:power_identities}
    F\mleft( x \mright)^{ m_2 } = x^{ m_1 } 
    \qquad 
    \mt{and}
    \qquad 
    F\mleft( x \mright)^{ n_2 } = x^{ n_1 } 
    \quad 
    \mt{for every $ x \in \F_q^{ \mystar } $}%
    \td 
\end{equation}
Then there exists an integer 
$ k $, 
$ \ovl{ k } = 1 $, 
such that 
$
    \mleft( m_2 , n_2 \mright) 
    \equiv 
    k
    \mleft( m_1 , n_1 \mright) 
$.
\end{lemma}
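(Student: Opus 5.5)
The plan is to translate the power identities~\eqref{eqn:power_identities} into linear congruences on exponents via a generator of the cyclic group $\F_q^{\mystar}$, and then to extract a unit multiplier. Put $c = q - 1$ and fix a generator $\gamma$ of $\F_q^{\mystar}$. Write $F(\gamma) = \gamma^{u}$ for some integer $u$. Taking $x = \gamma$ in~\eqref{eqn:power_identities} gives $\gamma^{u m_2} = \gamma^{m_1}$ and $\gamma^{u n_2} = \gamma^{n_1}$. Hence
\[
    u \mleft( m_2 , n_2 \mright) \equiv \mleft( m_1 , n_1 \mright) \pmod{c}.
\]
Since $F$ is a permutation, $F^{-1}$ is also a permutation of $\F_q^{\mystar}$. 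Substituting $x = F^{-1}(y)$ in~\eqref{eqn:power_identities} gives $F^{-1}(y)^{m_1} = y^{m_2}$ and $F^{-1}(y)^{n_1} = y^{n_2}$. Writing $F^{-1}(\gamma) = \gamma^{v}$, the same argument yields
\[
    v \mleft( m_1 , n_1 \mright) \equiv \mleft( m_2 , n_2 \mright) \pmod{c}.
\]

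Next I view $w_1 = (m_1 , n_1)$ and $w_2 = (m_2 , n_2)$ as elements of the additive group $(\Z/c\Z)^2$. The two congruences say $w_2 \in \langle w_1 \rangle$ and $w_1 \in \langle w_2 \rangle$, so the two cyclic subgroups coincide. Let $h = \gcd(m_1 , n_1 , c)$, so that $w_1$ has order $N = c/h$. From $w_1 = u w_2 = u v w_1$ we get $N \mid uv - 1$. Therefore $\gcd(v , N) = 1$, and $v w_1$ depends only on $v$ modulo $N$.

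The remaining step is to lift the unit $v \bmod N$ to an integer $k$ with $k \equiv v \pmod N$ and $\gcd(k , c) = 1$. Then $k w_1 = v w_1 = w_2$, which is exactly $\mleft( m_2 , n_2 \mright) \equiv k \mleft( m_1 , n_1 \mright)$ with $\ovl{k} = 1$.

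The lifting is the standard surjectivity of $(\Z/c\Z)^{\times} \to (\Z/N\Z)^{\times}$. To prove it, let $P$ be the product of the primes dividing $c$ but not dividing $N$ (with $P = 1$ if there are none). By CRT choose $k \equiv v \pmod N$ and $k \equiv 1 \pmod P$. Any prime dividing $c$ divides $N$ or $P$, so it does not divide $k$. Alternatively, Lemma~\ref{lemma:dirichlet} can be applied with $a = N$ and $b = vN$, which have the same gcd with $c$, to get $k$ coprime to $c$ with $kN \equiv vN \pmod c$, that is, $k \equiv v \pmod{c/N}$. That congruence is modulo $h$ rather than $N$, so the direct CRT argument is the one to use.

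I expect this lifting step to be the only real obstacle. It is routine, but it is essential: $v$ itself need not be coprime to $c$, since it is determined only modulo $N$. Everything else is bookkeeping with the cyclic group.
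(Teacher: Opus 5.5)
Your argument is correct and follows the paper's outline. Both show that $(m_1,n_1)$ and $(m_2,n_2)$ generate the same cyclic subgroup of $\Z_{q-1}^2$, extract a multiplier that is a unit modulo the common order $N$, and lift it to a unit modulo $q-1$; the only difference is that you do the lift by the Chinese remainder theorem, while the paper invokes Lemma~\ref{lemma:dirichlet}. Your aside dismissing that lemma is mistaken, though: with $a=N$, $b=vN$ the hypothesis can fail, since $\gcd(vN,q-1)=N\gcd(v,h)$. The paper's choice $a=h=(q-1)/N$, $b=hv$ does satisfy $\gcd(hv,q-1)=h\gcd(v,N)=h$, and it yields exactly $k\equiv v \pmod N$.
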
 
\begin{proof} 
Since $ F $ is a permutation of $ \F_q^{ \mystar } $, 
identities~\eqref{eqn:power_identities} imply that 
\begin{equation}
    \label{eqn:equal_power_sets}
    \mleft\{ 
        \mleft( x^{ m_1 } , x^{ n_1 } \mright)
        :
        x \in \F_q^{ \mystar } 
    \mright\}
    =
    \mleft\{ 
        \mleft( x^{ m_2 } , x^{ n_2 } \mright)
        :
        x \in \F_q^{ \mystar } 
    \mright\}%
    \td
\end{equation}
Denote by $ \widehat{ m_i } $ and $ \widehat{ n_i } $
the residue classes of $ m_i $ and $ n_i $ in 
$ \Z_{ q - 1 } $. 
Fixing a generator 
$ \omega $ 
of 
$ \F_q^{ \mystar } $ 
and using the isomorphism 
$ \Z_{ q - 1 }^2 \to \mleft( \F_q^{ \mystar } \mright)^2 $ 
given by 
$ 
    \mleft( \widehat{ a } , \widehat{ b } \mright) 
    \mapsto 
    \mleft( \omega^{ a } , \omega^{ b } \mright)  
$, 
we conclude from~\eqref{eqn:equal_power_sets} 
that the cyclic subgroups of 
$ \Z_{ q - 1 }^2 $ 
generated by 
$ \mleft( \widehat{ m_1 } , \widehat{ n_1 } \mright) $ 
and 
$ \mleft( \widehat{ m_2 } , \widehat{ n_2 } \mright) $ 
are equal. 
Let $ M $ be their common order.

There exists $ \ell \in \Z $ such that 
$ \gcd\mleft( M , \ell \mright) = 1 $ 
and 
$ 
    \mleft( \widehat{ m_2 } , \widehat{ n_2 } \mright) 
    =
    \ell 
    \mleft( \widehat{ m_1 } , \widehat{ n_1 } \mright)
$. 
Put $ d = \mleft( q - 1 \mright) / M $. 
Then 
\[
    \gcd\mleft( d \ell , q - 1 \mright) 
    = 
    \gcd\mleft( d \ell , d M \mright) 
    = 
    d \gcd\mleft( \ell , M \mright) 
    =
    d
    =
    \gcd\mleft( d , q - 1 \mright)%
    \td
\]
By Lemma~\ref{lemma:dirichlet}, there exists an integer 
$ k $, 
$ \ovl{ k } = 1 $, 
such that 
$ k d \equiv d \ell $, 
and so 
$ k \equiv \ell \mybmod M $. 
Since the order of 
$ \mleft( \widehat{ m_1 } , \widehat{ n_1 } \mright) $ in 
$ \Z_{ q - 1 }^2 $
is 
$ M $, 
we have 
$
    k
    \mleft( \widehat{ m_1 } , \widehat{ n_1 } \mright) 
    =
    \ell
    \mleft( \widehat{ m_1 } , \widehat{ n_1 } \mright) 
    =
    \mleft( \widehat{ m_2 } , \widehat{ n_2 } \mright) 
$. 
This gives the desired congruence 
$
    k
    \mleft( m_1 , n_1 \mright)
    \equiv 
    \mleft( m_2 , n_2 \mright)
$.
\end{proof} 
\section{Proof of Theorem~\ref{theorem:prime_classification}}
\label{section:main_proofs}
Define 
$  A_{ m } $ 
to be the group of all 
$ m $-th nonzero powers of 
$ \F_q $, 
that is, let 
$ A_m  = \mleft\{ x^m  : x \in \F_q^{ \mystar }  \mright\} $.  
Whenever 
$ \ovl{ m } = \ovl{ n } $, 
$ 
    \mleft\lvert 
        A_m 
    \mright\rvert 
    =
    \mleft\lvert 
        A_n
    \mright\rvert 
    =
    \mleft( q - 1 \mright) / \ovl{ m }
$ 
(see~\cite[p.~7]{lidl1997finite}), 
and, since $ \F_q^{ \mystar } $ is cyclic, 
we have 
$ A_m = A_n $. 
We now prove Theorem~\ref{theorem:prime_classification}. 
\begin{proof} 
The case $ p = 2 $ is trivial, so we assume that $ p $ is odd. 

If $ m_1 = n_1 $,
then the conclusion follows from 
Proposition~\ref{proposition:m_equal_n}.
Therefore, for the rest of the proof we assume that
$ m_1 \neq n_1 $. 

By Proposition~\ref{proposition:cond_on_parameters}, 
part~\ref{item:equal_bars}, 
$ \ovl{ m_2 } = \ovl{ m_1 } $ 
and 
$ \ovl{ n_2 } = \ovl{ n_1 } $.
After passing to the converse digraphs, if necessary, we may assume
that
$ \ovl{ m_1 } \le \ovl{ n_1 } $. 

Let 
$ \varphi \colon D_1 \to D_2 $, 
$ 
    \mleft( x , y \mright) 
    \mapsto 
    \mleft( f\mleft( x , y \mright) , g \mleft( y \mright) \mright) 
$ 
be a normalized isomorphism.

\medskip

Case 1:  
$ \ovl{ n_1 } > 1 $. 

If 
$ \ovl{ n_1 } = \ovl{ n_2 } = p - 1 $, 
then 
$ n_1 = n_2 = p - 1 $. 
In this case Lemma~\ref{lemma:dirichlet}
gives an integer
$ k $,
$ \ovl{ k } = 1 $,
such that
$ k m_1\equiv m_2 $ 
and 
$ k n_1 \equiv n_2 \equiv 0 $.
The conclusion follows in this case. 
We may assume now that 
$ 1 < \ovl{ n_1 } = \ovl{ n_2 } < p - 1 $.

\medskip

Our goal now is to show that 
$ g $ is linear, namely  
that there exists 
$ \lambda \in \F_p^{ \mystar } $ 
such that 
$ g\mleft( z \mright) = \lambda z $ 
for all
$ z \in \F_p$. 

Let 
$ A = A_{ n_1 } = A_{ n_2 } $. 
Then  
$ 1 < \mleft\lvert A \mright\rvert < p - 1 $. 

Consider an arc   
$ 
    \mleft( 1 , -z \mright) 
    \to 
    \mleft( x , z + x^{ n_1 } \mright)
$ 
in 
$ D_1 $. 
Applying $ \varphi $, and using $ g\mleft( -z \mright) = -g\mleft( z \mright) $ 
for all 
$ z \in \F_p $ (see Proposition~\ref{proposition:f_and_g}, 
part~\ref{prop:g_is_odd}), 
we obtain 
\begin{equation} 
\label{eqn:g_z_plus_x_n1}
    g\mleft( z + x^{ n_1 } \mright) 
    =
    g\mleft( z \mright) 
    +
    f\mleft( 1 , -z \mright)^{ m_2 } 
    f\mleft( x , z + x^{ n_1 } \mright)^{ n_2 }
    \td
\end{equation}
Because the first coordinate of the out-neighborhood of 
every vertex in 
$ D_2 $ 
runs over all 
$ \F_p $ (see~\eqref{eqn:general_neighborhood}), 
we conclude that 
$
    \mleft\{
        f\mleft( x , z + x^{ n_1 } \mright) 
        :
        x \in \F_p
    \mright\}
    =
    \F_p
$
for every 
$ z \in \F_p $. 
Taking 
$ 
    c\mleft( z \mright)
    =
    f\mleft( 1 , -z \mright)^{ m_2 } 
    \ne 0
$ 
(Proposition~\ref{proposition:f_and_g}, 
part~\ref{prop:f_x_0_is_0}),  
we can rewrite~\eqref{eqn:g_z_plus_x_n1} as 
\begin{equation}
    \label{eqn:g_z_plus_V}
    g\mleft( z + A \mright) 
    =
    g\mleft( z \mright) 
    + 
    c\mleft( z \mright) 
    A%
    \td
\end{equation}
Fix $ v \in A $. 
By~\eqref{eqn:g_z_plus_V}, the nonzero difference 
$ 
    g\mleft( z + v \mright) 
    -
    g\mleft( z \mright) 
    =
    g\mleft( -z \mright) 
    -
    g\mleft( -z - v \mright) 
$ 
belongs to both cosets 
$ c\mleft( z \mright) A $ 
and 
$ c\mleft( - z - v \mright) A $ 
of 
$ A \le \F_p^{ \mystar } $. 
Intersecting cosets are equal, and so 
$ c\mleft( z \mright) A = c\mleft( - z - v \mright) A $. 

Fix 
$ w \in A $, 
$ w \neq v $. 
Such 
$ w $ 
exists since 
$ \mleft\lvert A \mright\rvert > 1 $.
By an argument similar to the one we just used, we obtain 
$ 
    c\mleft( -z - v \mright) A
    = 
    c\mleft(\mleft( z + v \mright) - w\mright) A
    = 
    c\mleft( z + \mleft( v - w \mright) \mright) A
$. 
Therefore 
$ 
    c\mleft( z \mright) A
    = 
    c\mleft( z + \mleft(v - w \mright) \mright) 
    A
$ 
for all 
$ z\in \F_p $, 
and a fixed 
$ v - w \neq 0 $. 
Because every nonzero element of $ \F_p $ additively generates it, 
this implies 
$ c\mleft( z \mright) A = c\mleft( 0 \mright) A = A $. 
Then, from~\eqref{eqn:g_z_plus_V} we conclude that 
$ g\mleft( z + A \mright) = g\mleft( z \mright) + A $. 

This equality implies that 
$ g $ 
is an automorphism of a nontrivial Cayley digraph 
$ \operatorname{Cay} \mleft( \F_p , A \mright)$. 
The automorphism group of 
$ \operatorname{Cay} \mleft( \F_p , A \mright) $ 
is a permutation group of prime degree, transitive but not doubly transitive.  
It easily follows from a theorem of Burnside~\cite[p.~341]{burnside1911theory}, 
that such a permutation group is isomorphic to a subgroup of the affine group 
$ AGL_1\mleft( \F_p \mright) $. 
See also a more recent paper by 
Alspach~\cite{alspach1973point}
or by 
M\"{u}ller~\cite{Muller2005}. 
It turns out that this statement was essentially proven by Galois in 1831; 
see several references in 
Jones and Zvonkin~\cite{jones2023groups}.
Hence, 
$ g $ 
is affine, and 
$ g\mleft( 0 \mright) = 0 $ gives 
$ g\mleft( z \mright) = \lambda z $, $ \lambda \in \F_p^{ \mystar } $, 
as claimed. 

\medskip

We next prove the stronger identities 
\begin{equation}
\label{eqn:g_z_eq_z_f_powers}
g\mleft( z \mright) = z%
\tc 
\qquad 
f\mleft( x , y \mright)^{ m_2 } = x^{ m_1 }
\tc 
\qquad 
f\mleft( x , y \mright)^{ n_2 } = x^{ n_1 }%
\quad 
\mt{for all $ x , y , z \in \F_p $}%
\td
\end{equation}
As $\ovl{m_1} = \ovl{m_2}$, let 
$ U = A_{ m_1 } = A_{ m_2 } $.  
Consider  any 
$ a , x \in \F_p^{ \mystar } $, $y\in \F_p $. Let 
$ u = x^{ m_1 } \in U $ and  
$ v = a^{ n_1 } \in A $. Then 
 $
    \mleft(  x , y \mright) 
    \to 
    \mleft( a , u v - y \mright) 
$ 
and 
$
    \mleft( 1 , y + \mleft( 1 - u \mright) v \mright) 
    \to 
    \mleft( a , u v - y \mright) 
$ 
are arcs in 
$ D_1 $. 
Applying 
$ \varphi $ 
to both arcs and setting 
$ B\mleft( y \mright) = f\mleft( 1 , y \mright)^{ m_2 } $, 
we obtain 
\begin{align}
\lambda u v 
&=
f\mleft( x , y \mright)^{ m_2 } 
f\mleft( a , u v - y \mright)^{ n_2 }%
\tc
\label{eqn:lambda_u_v_first_arc}
\\
\lambda v 
&=
B\mleft( y + \mleft( 1 - u \mright) v \mright) 
f\mleft( a , u v - y \mright)^{ n_2 }%
\nonumber
\td
\end{align} 
Dividing these two equations gives 
\begin{equation} 
\label{eqn:f_x_y_m2_eq_uB}
f\mleft( x , y \mright)^{ m_2 } 
=
u
B\mleft( y + \mleft( 1 - u \mright) v \mright)%
\td
\end{equation} 
We claim that $ B $ is constant on $ \F_p $.
Since 
$ 
    \mleft\lvert U \mright\rvert 
    = 
    \mleft( p - 1 \mright) / \ovl{ m_1 } 
    > 
    1 
$ (indeed, as $\ovl{m_1} \le \ovl{n_1} < p-1$), 
we may choose  $x\in \F_p^*$ so that 
$ u = x^{m_1} \neq 1 $. 
Choose distinct $ v , w \in A $. 
From~\eqref{eqn:f_x_y_m2_eq_uB} it follows that
\[
    B\mleft(
        y + \mleft( 1 - u \mright) v 
    \mright)
    =
    B\mleft(
        y + \mleft( 1 - u \mright) w
    \mright)
    \qquad 
    \mt{for every $ y \in \F_p $.}
\] 
Put 
$ t = y + \mleft( 1 - u \mright) w $ 
and 
$ h = \mleft( 1 - u \mright) \mleft( v - w \mright) \neq 0 $. 
Then
\[
    B\mleft( t + h \mright) = B\mleft( t \mright) 
    \qquad 
    \mt{for every $ t \in \F_p $.}
\]
Iterating gives 
$ B\mleft( t + j h \mright) = B\mleft( t \mright) $ 
for all $ j $, $ 0 \le j \le p - 1 $. 
Hence, $ B $ is constant. 

As $ B\mleft( 0 \mright) = f(1,0)^{m_2} = 1 $, 
\eqref{eqn:f_x_y_m2_eq_uB} yields 
$ f\mleft( x , y \mright)^{ m_2 } = x^{ m_1 } $. 
This, together with~\eqref{eqn:lambda_u_v_first_arc}, 
yields 
$ f\mleft( a , b \mright)^{ n_2 } = \lambda a^{ n_1 } $ 
for all 
$ a , b \in \F_p $. 
Since $ f\mleft( 1 , 0 \mright) = 1 $, this forces 
$ \lambda = 1 $. 
All three claims in~\eqref{eqn:g_z_eq_z_f_powers} are proved. 

Define 
$ F\mleft( x \mright) = f\mleft( x , 0 \mright) $, 
$ x \in \F_p $, so 
$ F\mleft( x \mright)^{ m_2 } = x^{ m_1 } $ 
and 
$ F\mleft( x \mright)^{ n_2 } = x^{ n_1 } $. 
By 
Proposition~\ref{proposition:f_and_g}, 
part~\ref{prop:f_one_var_permutation}, 
$ F $ is a permutation of $ \F_p $, 
and by Lemma~\ref{lemma:power_identities} this completes the proof 
of Case~1. 

\medskip

Case 2:  $\ovl{ n_1 } = 1 $.  

As  $ \ovl{ m_1 } \le  \ovl{ n_1 } = 1 $, $ \ovl{ m_1 }  = 1 $. 
We 
claim that the first coordinate 
$ f\mleft( x , y \mright) $ 
of an isomorphism 
$ \varphi \colon D_1 \to D_2 $ 
depends only on $ x $. 
Indeed, two distinct vertices 
$ \mleft( a , b \mright) $ 
and 
$ \mleft( c , d \mright) $ 
in $ D_i $ 
have no common in-neighbor in 
\[
    D_i^- \mleft( a , b \mright)
    =
    \mleft\{ 
        \mleft( 
            s 
            , 
            -b + s^{ m_i } a^{ n_i }
        \mright) 
        : 
        s \in \F_p 
    \mright\}
    \qquad 
    \mt{and} 
    \qquad 
    D_i^-\mleft( c , d \mright)
    =
    \mleft\{ 
        \mleft( 
            t
            , 
            -d + t^{ m_i } c^{ n_i }
        \mright) 
        : 
        t \in \F_p 
    \mright\}
\] 
if and only if the equation 
$ x^{ m_i } \mleft( a^{ n_i } - c^{ n_i } \mright) = b - d $ 
has no solution. 
A solution does not exist if and only if 
$ a^{ n_i } = c^{ n_i } $, 
which is equivalent to 
$ a = c $. 
Thus, in both digraphs, two distinct vertices have disjoint 
in-neighborhoods if and only if their first coordinates agree. 
This property is preserved by 
$ \varphi $, 
and therefore 
$ f\mleft( x , y \mright) = \widehat f \mleft( x \mright) $ 
for some permutation 
$ \widehat f $ 
of 
$ \F_p $, 
with  
$ \widehat f \mleft( 0 \mright) = 0 $, 
$ \widehat f \mleft( 1 \mright) = 1 $. 

We now claim that 
$ g $ 
and 
$ \widehat f $ 
satisfy conditions~\eqref{eqn:g_z_eq_z_f_powers} 
(where $ f \mleft( x , y \mright) $ is replaced by 
$ \widehat f \mleft( x \mright) $). 
Once this is established, the remainder of the proof is identical 
to the argument of Case~1.

Since 
for every $ y \in \F_p $, 
$ 
    \mleft( 1 , y \mright) 
    \to 
    \mleft( a , a^{ n_1 } - y \mright) 
$ 
in 
$ D_1 $, 
we have 
\begin{equation} 
    \label{eqn:g_and_f_second_case}
    g\mleft( y \mright) 
    + 
    g\mleft( a^{ n_1 } - y \mright) 
    =
    \widehat f \mleft( a \mright)^{ n_2 }%
    \td
\end{equation}
For 
$ y = 0 $, 
\eqref{eqn:g_and_f_second_case} 
implies 
$   
    g\mleft( 0 \mright) 
    + 
    g\mleft( a^{ n_1 } \mright) 
    =
    g\mleft( a^{ n_1 } \mright) 
    =
    \widehat f \mleft( a \mright)^{ n_2 } 
$. 
Replacing 
$ \widehat f \mleft( a \mright)^{ n_2 } $ 
by 
$ g\mleft( a^{ n_1 } \mright) $ 
in~\eqref{eqn:g_and_f_second_case}, 
yields 
$ 
    g\mleft( y \mright) 
    + 
    g\mleft( a^{ n_1 } - y \mright) 
    =
    g\mleft( a^{ n_1 } \mright) 
$. 
Let 
$ v = a^{ n_1 } $. 
Then 
$ 
    g\mleft( y \mright) 
    +
    g\mleft( v - y \mright) 
    =
    g\mleft( v \mright) 
$ 
for all 
$ v , y \in \F_p $. 
Setting 
$ v = y + z $ 
yields additivity of 
$ g $ 
on 
$ \F_p $, which together with 
$ g\mleft( 1 \mright) = \widehat f \mleft( 1 \mright)^{ n_2 } = 1 $
implies 
$ g\mleft( z \mright) = z $. 
Finally, applying $ \varphi $ 
to the arcs 
$ \mleft( x , 0 \mright) \to \mleft( 1 , x^{ m_1 } \mright) $ 
and 
$ \mleft( 1 , 0 \mright) \to \mleft( x , x^{ n_1 } \mright) $ 
in 
$ D_1 $ 
gives 
$ \widehat f \mleft( x \mright)^{ m_2 } = x^{ m_1 } $ 
and 
$ \widehat f \mleft( x \mright)^{ n_2 } = x^{ n_1 } $, 
and conditions~\eqref{eqn:g_z_eq_z_f_powers} are established. 
This completes the proof of the theorem in 
Case~2, and so the theorem is proved.
\end{proof}
\section{%
    An Infinite Family of Counterexamples to 
    the Converse of 
    Proposition~\ref{proposition:sufficiency}
}
\label{section:infinite_counterexamples}
An exhaustive computation shows that the smallest 
counterexample to the converse of 
Proposition~\ref{proposition:sufficiency} 
is the pair 
$ D\mleft( 25 ; 3 , 6 \mright) $ 
and 
$ D\mleft( 25 ; 3 , 18 \mright) $, 
both with 
$ 625 $ 
vertices 
and 
$ 15625 $ 
arcs. 
The proof that these two digraphs are isomorphic generalizes 
to an infinite family in the following way. 

\begin{theorem}
\label{thm:(m,n) cong (m,rn)}
Let 
$ R $ 
be an odd prime power such that
$R \equiv 1 \mybmod 4$, 
and put
$ q = R^2 $, 
$ m = \mleft( R + 1 \mright) / 2 $.
Then
$
    D( q ; m , 2 m ) 
    \cong 
    D( q ; m , 2 m^2)
    \cong 
    D( q ; m^2 , 2 m )%
$. 
Moreover, 
there is no integer 
$ k $ 
for which 
$ 
    k
    \mleft ( m , 2 m \mright) 
    \equiv 
    \mleft ( m , 2 m^2 \mright) 
$ 
or 
$ 
    k
    \mleft ( m , 2 m \mright) 
    \equiv 
    \mleft ( m^2 , 2 m \mright) 
$.
\end{theorem}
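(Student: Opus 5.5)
The second part of Theorem~\ref{thm:(m,n) cong (m,rn)}, non-existence of $k$, is elementary arithmetic, so I would do it first. Write $q-1=(R-1)(R+1)=2m(R-1)$. Since $R\equiv 1 \mybmod 4$, $m=(R+1)/2$ is odd, so $\gcd(m,R-1)=\gcd(m,2)=1$ and hence $\ovl{m}=m$ and $\ovl{2m}=R+1=2m$.

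For the first pair, $km\equiv m$ forces $k\equiv 1 \mybmod 2(R-1)$. Then $2(k-1)m$ is a multiple of $4m(R-1)=2(q-1)$, so $2km\equiv 2m$. The congruence $2km\equiv 2m^2$ would therefore need $(R+1)(m-1)\equiv 0 \mybmod (R-1)(R+1)$, that is, $(R-1)\mid (R-1)/2$, which is false.

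For the second pair, $2km\equiv 2m$ forces $k\equiv 1\mybmod R-1$, so $km-m$ is a multiple of $m(R-1)$. But $m^2-m=m(R-1)/2$ is not such a multiple modulo $2m(R-1)$, so $km\equiv m^2$ is impossible. (One could also see this as the cyclic subgroups generated by $(m,2m)$ and $(m,2m^2)$ in $\Z_{q-1}^2$ being different, as in the proof of Lemma~\ref{lemma:power_identities}.)

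For the isomorphisms, the key arithmetic is that $x^{2m}=x^{R+1}=N(x)$ is the norm $\F_q\to\F_R$. Hence
\[
x^{2m^2}=N(x)^m=N(x)\,N(x)^{(R-1)/2}=\chi(N(x))\,N(x),
\]
where $\chi$ is the quadratic character of $\F_R$. Thus $D(q;m,2m)$ and $D(q;m,2m^2)$ have adjacency conditions $x_2+y_2=x_1^mN(y_1)$ and $x_2+y_2=\chi(N(y_1))\,x_1^mN(y_1)$; they differ only by a sign governed by $\chi(N(y_1))$. Similarly $x^{m^2}=x^m\cdot N(x)^{(R-1)/4}$, which differs from $x^m$ by a fourth root of unity depending on $N(x)$. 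So in both cases I would look for an explicit isomorphism that absorbs this sign (respectively fourth root of unity). I would also use the two automorphism tools already available: the scalings $\psi_c$ from the proof of Proposition~\ref{proposition:f_and_g}, and the Frobenius map $(x,y)\mapsto(x^R,y^R)$. I also expect to need the observation that $m$ odd gives $(-1)^m=-1$ and $N(-1)=1$.

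Concretely, I would first try $\varphi(x,y)=(\varepsilon(x)\,x,\ \delta\cdot y)$, where $\varepsilon(x)$ is chosen from a small set of roots of unity according to the class of $N(x)$ in $\F_R^{\mystar}/(\F_R^{\mystar})^2$, and then check the adjacency identity $\varepsilon(x)^m\,\chi(N(y))=\text{const}$. A naive computation shows this fails if $\varepsilon$ depends on the tail $x$ alone, because the correcting sign depends on the head $y$. So the real construction must let the first coordinate depend also on the second coordinate, or let $g$ be nonlinear over $\F_q$ while respecting the $\F_R$-structure. I would try $g$ $\F_R$-linear but not $\F_q$-linear; for instance, with $\F_q=\F_R(\sqrt{\theta})$, act differently on the $\F_R$ part and the $\sqrt{\theta}$ part. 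I would choose it so that a fixed nonsquare factor of $\F_R$ is absorbed through the relation $N(\sqrt{\theta})=-\theta$. I would verify the construction first on $D(25;3,6)\cong D(25;3,18)$.

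Finding this explicit map, and checking that it is bijective and preserves arcs, is the main obstacle. Once the first isomorphism is built, I expect the second, $D(q;m,2m)\cong D(q;m^2,2m)$, to follow by the analogous construction, or by combining the first with passing to converse digraphs and Proposition~\ref{proposition:sufficiency}.
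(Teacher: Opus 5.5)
\textbf{Gap: the isomorphism is never constructed.} Your nonexistence argument is correct. The paper's version is shorter: every multiple $(u,v)$ of $(m,2m)$ satisfies $v\equiv 2u$, while $2m^2-2m=(q-1)/2\not\equiv 0$. The heart of the theorem, however, is the isomorphism $D(q;m,2m)\cong D(q;m,2m^2)$, and that is not proved. You show correctly that no map $(x,y)\mapsto(\varepsilon(x)x,\delta y)$ with a scalar $\delta$ works, then list two possible directions and call the construction the main obstacle. So the proposal only proves the arithmetic half of the statement.

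\textbf{The missing construction.} Put $a=(R-1)/2$. It is even, so $m=a+1$ and $q-1=4am$. Let $\alpha$ be primitive and $\beta=\alpha^m$, of order $4a=2(R-1)$. Then $\beta^2$ generates $\F_R^{\ast}$, $\{1,\beta\}$ is an $\F_R$-basis, $\beta^{2a}=-1$, and $\beta^{3a}\in\F_R$ is a primitive fourth root of unity. Define the following maps.
\begin{itemize}
\item $T$ is the $\F_R$-linear map with $T(1)=\beta^{3a}$ and $T(\beta)=\beta$.
\item $\sigma(0)=0$ and $\sigma(\alpha^j)=\alpha^{j+s(j)}$, where $s(j)=a$ for $j$ even and $s(j)=2a$ for $j$ odd. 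This is a bijection because $s(j)$ is even, so $j\mapsto j+s(j)$ is a translation on each parity class.
\end{itemize}
For $x=\alpha^u$, $y=\alpha^v$ one has $x^my^{2m}=\beta^{u+2v}$ and $\sigma(x)^m\sigma(y)^{2m^2}=\beta^{u+s(u)+2m(v+s(v))}$. The congruence $2m(v+s(v))\equiv 2v+2a \pmod{4a}$ then gives $T(x^my^{2m})=\sigma(x)^m\sigma(y)^{2m^2}$. By additivity of $T$, the map $(x,y)\mapsto(\sigma(x),T(y))$ sends arcs to arcs, so it is an isomorphism.

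\textbf{How this fits your framework.} One has $\sigma(y)^{2m^2}=-N(y)$ for every $y\neq 0$. So $\sigma$, applied to the head, turns the varying sign $\chi(N(y))$ into the constant $-1$. The factor $-\beta^{s(u)}$ left over from the tail depends only on the parity of $u$, that is, on whether $x^mN(y)$ lies in $\F_R$ or in $\beta\F_R$. That is exactly what the non-$\F_q$-linear $T$ corrects. Your second guess, an $\F_R$-linear second coordinate acting differently on $\F_R$ and on $\sqrt{\theta}\,\F_R$, was therefore the right one, and the first coordinate need not depend on $y$.

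\textbf{The second isomorphism.} It needs no new construction, but you must supply the multiplier. $k=m$ is not admissible, since $\ovl{m}=m$. Take $k=m+4a$: then $\ovl{k}=1$ and $k(m,2m^2)\equiv(m^2,2m^3)\equiv(m^2,2m)$, because $2m^3-2m=(q-1)(a+2)/2$ is a multiple of $q-1$. Proposition~\ref{proposition:sufficiency} then gives $D(q;m,2m^2)\cong D(q;m^2,2m)$. Passing to converse digraphs does not help here, since $D(q;m,n)^{\mathrm{con}}=D(q;n,m)$ merely swaps the two exponents.
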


\begin{proof}
We first prove that 
$ 
    D\mleft( q ; m , 2 m \mright) 
    \cong
    D\mleft( q ; m , 2 m^2 \mright) 
$. 

Put
$ a = (R - 1)/2 $. 
Since 
$ R \equiv 1 \mybmod 4 $, 
$ a $ is even, and 
$ m = \mleft( R + 1 \mright) / 2 $ 
is odd.  

Let $ \alpha $ be a primitive element of $ \F_q $,
and put $ \beta = \alpha^m $.
Since $ q - 1 = 4 a m $, the order of $ \beta $ is
$ 4 a = 2 \mleft( R - 1 \mright) $.
Consequently, $ \beta \notin \F_R $,
while $ \beta^2 $ has order $ 2 a = R - 1 $
and therefore generates $ \F_R^{ \mystar } $.
Thus $ \mleft\{ 1 , \beta \mright\} $
is a basis of $ \F_q $ over $ \F_R $, 
$ \beta^{ 2 i } \in \F_R^{ \mystar } $ 
and 
$  \beta^{ 2 i + 1 } \in \beta\F_R^{ \mystar } $ 
for every 
$ i \in \Z $.
Since 
$ a $ 
is even, 
$ 3 a $ 
is even, and therefore
$ \beta^{ 3 a } \in \F_R^{ \mystar }$. 
Obviously, 
$ \mleft\{ \beta^{ 3 a } , \beta \mright\} $ 
is another basis of 
$ \F_q $ 
over 
$ \F_R $. 
Therefore, the transformation 
\[
    T( \lambda \cdot 1 + \mu \beta )
    =
    \lambda \beta^{ 3 a } + \mu \beta%
    \tc
    \quad
    \lambda , \mu 
    \in 
    \F_R%
\] 
maps one basis to another, and so is an invertible $ \F_R $-linear 
transformation of $ \F_q $. 

Applying $ T $ to powers of 
$ \beta $ we compute 
\begin{equation}
    \label{eqn:T_on_beta_powers_rm_n}
    T( \beta^j )
    =
    \begin{cases}
        \beta^{j+3a}%
        \tc
        & 
        \mt{if $ j $ is even}%
        \tc\\
        \beta^j%
        \tc
        & 
        \mt{if $ j $ is odd}%
        \td
    \end{cases}
\end{equation}

Define a mapping  
$ \sigma $ 
on 
$ \F_q $ 
by
$ \sigma ( 0 ) = 0 $, 
and, for 
$ \alpha^j \in \F_q^{ \mystar } $,
\[
    \sigma( \alpha^j )
    =
    \alpha^{ j + s( j ) }%
    \tc
    \qquad
    \mt{where }\,
    s(j)
    =
    \begin{cases}
        \hphantom{2}a%
        \tc 
        & 
        \mt{if $ j $ is even}%
        \tc\\
        2a%
        \tc 
        & 
        \mt{if $ j $ is odd}%
        \td
    \end{cases}
\]

Since $a$ is even, 
$ s(j) $ is always even,
and the map $ j \mapsto j + s\mleft( j \mright) $ 
preserves parity and acts, in an obvious way, as a translation on 
each parity class of $ \Z_{q-1} $.
As the map is injective, it is a permutation of 
$ \Z_{q-1} $.
Hence, 
$ \sigma $ 
is a bijection of 
$ \F_q$. 

We claim that
\begin{equation}
    \label{eqn:T_and_sigma}
    T( x^m y^{ 2 m } )
    =
    \sigma( x )^m
    \sigma( y )^{ 2 m^2 }
    \quad 
    \mt{ for all  $ x , y \in \F_q $}%
    \td
\end{equation}
It suffices to check this for 
$ x , y \in \F_q^{ \mystar } $,
since both sides are zero if either 
$ x =0 $ 
or 
$ y = 0 $.
Write
$ x = \alpha^u $ 
and 
$ y = \alpha^v $. 
Since 
$ \beta = \alpha^m $, 
we have
$
    x^m y^{ 2 m }
    =
    \alpha^{ m u + 2 m v }
    =
    \beta^{ u + 2 v }%
$. 
Therefore, by~\eqref{eqn:T_on_beta_powers_rm_n}, 
\[
    T\mleft( x^m y^{ 2 m} \mright)
    =
    T\mleft( \beta^{ u + 2 v } \mright)
    =
    \begin{cases}
        \beta^{ u + 2 v + 3 a }%
        \tc 
        &\mt{if $ u $ is even}%
        \tc
        \\
        \beta^{ u + 2 v }%
        \tc
        & 
        \mt{if $ u $ is odd}%
        \td
    \end{cases}
\]

On the other hand,
\[
    \sigma( x )^m
    \sigma( y )^{ 2 m^2 }
    =
    \alpha^{ m( u + s( u ) ) }
    \alpha^{ 2 m^2 ( v + s( v ) ) }
    =
    \beta^{ u + s( u ) + 2 m ( v + s( v ) ) }%
    \td
\]
Thus, to prove 
\eqref{eqn:T_and_sigma}, 
it suffices to check
that for every $ u $ and $ v $, 
\begin{equation}
    \label{eqn:u_s_m_congruence_mod_4a}
    u + s( u ) + 2 m ( v + s( v ) )%
    \equiv
    \begin{cases}
        u + 2 v + 3 a%
        \tc 
        &\mt{if $ u $ is even}%
        \tc
        \\
        u + 2 v%
        \tc
        & 
        \mt{if $ u $ is odd}%
    \end{cases}
    \mymod { 4 a }%
    \td
\end{equation}

Note that since $ m = a + 1 $, 
\[
    2 m ( v + s( v ) )
    =
    \mleft\{
    \begin{aligned}
        ( 2 a + 2 )
        ( v + a )%
        &\equiv 
        2 v + 2 a 
        &&\mymod 
        4 a%
        \tc 
        &&\mt{if $ v $ is even}%
        \tc
        \\
        ( 2 a + 2 ) 
        ( v + 2 a )%
        &\equiv 
        2 v + 2 a 
        &&\mymod 
        4 a%
        \tc 
        &&\mt{if $ v $ is odd}%
        \td
    \end{aligned}
    \mright.
\]

Hence, the left-hand side 
of~\eqref{eqn:u_s_m_congruence_mod_4a}  
is
\[
    u + s( u ) + 2 m ( v + s( v ) )
    \equiv
    u + s( u ) + 2 v + 2 a
    \mymod { 4 a }%
\]
for every $ v $. 
Since 
$ s\mleft( u \mright) =  a $ 
if 
$ u $ 
is even, 
and 
$ s\mleft( u \mright) = 2 a $ 
if 
$ u $ 
is odd, 
we have established congruence~\eqref{eqn:u_s_m_congruence_mod_4a}
and, consequently, 
identity~\eqref{eqn:T_and_sigma}.

Now define 
$ 
    \varphi 
    \colon 
    \F_q^2 
    \to 
    \F_q^2 
$ 
by
$ 
    \varphi ( x , y )
    =
    (
        \sigma( x )
        ,
        T( y )
    )
$. 
If 
$ ( x_1 , y_1 ) \to  ( x_2 , y_2 ) $ 
in 
$ D( q ; m , 2 m ) $, 
then
$ y_1 + y_2 = x_1^m x_2^{ 2 m } $. 
Applying 
$ T $ 
and using~\eqref{eqn:T_and_sigma} 
we obtain 
\[
    T( y_1 ) + T( y_2 )
    =
    T \left( y_1 + y_2 \right) 
    =
    T( x_1^m x_2^{ 2 m } )
    =
    \sigma( x_1 )^m
    \sigma( x_2 )^{ 2 m^2 }%
    \td
\]
Hence, 
$
    \bigl(
        \sigma\mleft( x_1 \mright)
        ,
        T\left( y_1 \right)
    \bigr)
    \to 
    \bigl(
        \sigma\left( x_2 \right)
        ,
        T\left( y_2 \right)
    \bigr)
$
is an arc in 
$ D( q ; m , 2 m^2 ) $.  
Since 
$ \varphi $ 
is bijective and maps arcs to 
arcs, and both digraphs have 
$ q^3 $ arcs, 
$ \varphi $ 
is an isomorphism from 
$
    D( q ; m , 2 m )
$ 
to 
$
    D( q ; m , 2 m^2 )
$.
\medskip

We now prove that 
$
    D( q ; m , 2 m ) 
    \cong 
    D( q ; m^2 , 2 m )%
$.

Put $ k = m + 4 a $. 
Since 
$ \gcd\mleft( m , 4 a \mright) = 1 $, 
we have 
$ 
    \gcd\mleft( k , 4 a \mright) 
    =
    \gcd\mleft( k , m \mright) 
    =
    1
$, 
and consequently $ \ovl{ k } = 1 $. 
Moreover, 
$ k \equiv m \mybmod 4 a $, 
so 
$ 
    k
    \mleft( m , 2 m^2 \mright) 
    \equiv 
    \mleft( m^2 , 2 m^3 \mright) 
    \equiv 
    \mleft( m^2 , 2 m \mright) 
$, 
where the last congruence follows from 
\[
    2 m^3 - 2 m 
    =
    2 m \mleft( m - 1 \mright) \mleft( m + 1 \mright) 
    =
    2 am \mleft( a + 2 \mright) 
    =
    \mleft( q - 1 \mright) 
    \cdot 
    \cbfrac{ a + 2 }{ 2 } 
    \equiv 
    0%
    \td
\]
Proposition~\ref{proposition:sufficiency} now gives 
the result. 

It remains to prove the two nonexistence assertions. 
Every scalar multiple 
$ \mleft( u , v \mright) $ 
of 
$ \mleft( m , 2 m \mright) $ 
has 
$ v \equiv 2 u $. 
However, 
$
    2 m^2 - 2 m 
    =
    2 m \mleft( m - 1 \mright) 
    =
    2 m a 
    =
    \mleft( q - 1 \mright) / 2 
    \not\equiv 
    0%
$, a contradiction proving both assertions. 
\end{proof}

The smallest 
$ q $ 
satisfying the conditions of 
Theorem~\ref{thm:(m,n) cong (m,rn)} 
is 
$ q = 25 $. 
In this case 
$ R = 5 $, 
and hence, 
$ a = ( R - 1 ) / 2 = 2 $,
$ m = 3 $,
$ n = 6 $.
Thus the theorem gives
$ 
    D( 25 ; 3 , 6 ) 
    \cong 
    D( 25 ; 3 , 18 ) 
    \cong 
    D( 25 ; 9 , 6 ) 
$.

\section{Concluding Remarks}
\label{section:concluding_remarks}
Our Theorem~\ref{theorem:prime_classification}, together with
Proposition~\ref{proposition:sufficiency}, gives a simple necessary and
sufficient condition for two monomial digraphs over a prime field 
to be isomorphic. 

For prime powers that are not primes, Theorem~\ref{thm:(m,n) cong (m,rn)} shows
that isomorphic digraphs need not have parameter pairs related by simultaneous
multiplication by an integer relatively prime to $ q - 1 $. The remaining
problem is to determine which parameter pairs yield  isomorphic monomial
digraphs over nonprime finite fields.

Another question is the following. 
Let 
$ D_1 = D\mleft( q ; m_1 , n_1 \mright) $ 
and 
$ D_2 = D\mleft( q ; m_2 , n_2 \mright) $. 
For any digraphs $ H $ and $ D $, let $ n\mleft( H , D \mright) $
denote the number of subdigraphs of $ D $ isomorphic to $ H $.
Does there exist a finite family $ \mathcal H $ of digraphs,
such that, for sufficiently large prime power $ q $ and all parameter pairs,
the equalities 
$
    n\mleft( H , D_1 \mright)
    =
    n\mleft( H , D_2 \mright)
$
for every $ H \in \mathcal H $
imply $ D_1 \cong D_2 $?
The authors have not been able to find such a family. 

A more general question is finding a polynomial time algorithm in $ q $ 
that decides whether $ D_1 $ and $ D_2 $ are isomorphic. 
\printbibliography
\end{document}